\title{Periodic points of rational functions over finite fields}
\author{Derek Garton}
\address{Fariborz Maseeh Department of Mathematics and Statistics, Portland State University}
\email{\href{mailto:gartondw@pdx.edu}{gartondw@pdx.edu}}
\date{\today}
\subjclass[2020]{Primary 37P05;
Secondary 37P25, 37P35, 11T06, 13B05}
\keywords{Arithmetic Dynamics, Periodic Points, Finite Fields, Galois Theory}
\newcommand{\Z}{\ensuremath{\mathbb{Z}}}
\renewcommand{\P}{\ensuremath{\mathbb{P}}}
\newcommand{\Q}{\ensuremath{\mathbb{Q}}}
\newcommand{\R}{\ensuremath{\mathbb{R}}}
\newcommand{\F}{\ensuremath{\mathbb{F}}}
\newcommand{\lv}{\ensuremath{\left\vert}}
\newcommand{\rv}{\ensuremath{\right\vert}}
\newcommand{\lp}{\ensuremath{\left(}}
\newcommand{\rp}{\ensuremath{\right)}}
\newcommand{\lb}{\ensuremath{\left\{}}
\newcommand{\rb}{\ensuremath{\right\}}}
\newcommand{\lc}{\ensuremath{\left[}}
\newcommand{\rc}{\ensuremath{\right]}}
\newcommand{\p}{\ensuremath{\mathfrak{p}}}
\newcommand{\m}{\ensuremath{\mathfrak{m}}}
\newcommand{\q}{\ensuremath{\mathfrak{q}}}
\newcommand{\qover}{\ensuremath{\mathfrak{Q}}}
\newcommand{\av}{\ensuremath{\mathbf{a}}}
\DeclareMathOperator{\Gal}{Gal}
\DeclareMathOperator{\Aut}{Aut}
\DeclareMathOperator{\Per}{Per}
\DeclareMathOperator{\Char}{char}
\DeclareMathOperator{\id}{id}
\DeclareMathOperator{\Frac}{Frac}
\DeclareMathOperator{\Spec}{Spec}
\DeclareMathOperator{\Fix}{F}
\DeclareMathOperator{\fix}{f}
\DeclareMathOperator{\content}{content}
\theoremstyle{plain}
\newtheorem{theorem}{Theorem}[section]
\newtheorem{lemma}[theorem]{Lemma}
\newtheorem{fact}[theorem]{Fact}
\newtheorem*{namedthm}{\namedthmname}
\newcounter{namedthm}
\theoremstyle{remark}
\newtheorem{example}[theorem]{Example}
\theoremstyle{definition}
\newenvironment{named}[1]
  {\def\namedthmname{#1}%
   \refstepcounter{namedthm}%
   \namedthm\def\@currentlabel{#1}}
  {\endnamedthm}
\begin{document}

\begin{abstract}
For $q$ a prime power and $\phi$ a rational function with coefficients in $\F_q$, let $p(q,\phi)$ be the proportion of $\P^1\lp\F_q\rp$ that is periodic with respect to $\phi$.
And if $d$ is a positive integer, let $Q_d$ be the set of prime powers coprime to $d!$ and let $\mathcal{P}(d,q)$ be the expected value of $p(q,\phi)$ as $\phi$ ranges over rational functions with coefficients in $\F_q$ of degree $d$.
We prove that if $d$ is a positive integer no less than $2$, then $\mathcal{P}(d,q)$ tends to 0 as $q$ increases in $Q_d$.
This theorem generalizes our previous work, which held only for quadratic polynomials, and only in fixed characteristic.
To deduce this result, we prove a uniformity theorem on specializations of dynamical systems of rational functions with coefficients in certain finitely-generated algebras over residually finite Dedekind domains.
This specialization theorem generalizes our previous work, which held only for algebras of dimension one.
\end{abstract}

\maketitle
\tableofcontents

\section{Introduction}
\label{intro}

A \emph{\textup{(}discrete\textup{)} dynamical system} is a pair $\lp S,\phi\rp$ consisting of a set $S$ and a function $\phi\colon S\to S$.
For notational convenience, for any positive integer $n$, we let $\phi^n=\overbrace{\phi\circ\cdots\circ\phi}^{n\text{ times}}$; furthermore, we set $\phi^0=\id_S$.
For any $a\in S$, if there is some positive integer $n$ such that $\phi^n(a)=a$, we say that $a$ is \emph{periodic} (for $\phi$).
Let $\Per{\lp S,\phi\rp}=\lb a\in S\mid a\text{ is periodic for }\phi\rb$.

The purpose of this paper is to study the number periodic points of dynamical systems induced by polynomials or rational functions of fixed degree over finite fields, so we mention the definition of the \emph{degree} of a rational function that we will use: if $k$ is a field, $\phi\in k(X)$, and $d\in\Z_{\geq0}$, we will write $\deg{\phi}=d$ if there exist $f,g\in k[X]$ such that
\begin{itemize}
\item
$\phi=f/g$,
\item
$\gcd{(f,g)}=1$, and
\item
$d=\max{\lp\lb\deg{f},\deg{g}\rb\rp}$.
\end{itemize}
For any nonnegative integer $d$ and prime power $q$, let
\begin{itemize}
\item
$\displaystyle{\mathcal{P}(d,q)\colonequals
\frac{1}{\lv\lb\phi\in \F_{q}[X]\mid\deg{\phi}=d\rb\rv}
\sum_{\substack{\phi\in\F_{q}[X]\\
\deg{\phi}=d}}
{\frac{\lv\Per{\lp\F_{q},\phi\rp}\rv}{\lv\P^1\lp\F_{q}\rp\rv}}}$,
\item$\displaystyle{\mathcal{R}(d,q)\colonequals
\frac{1}{\lv\lb\phi\in \F_{q}(X)\mid\deg{\phi}=d\rb\rv}
\sum_{\substack{\phi\in\F_{q}(X)\\
\deg{\phi}=d}}
{\frac{\lv\Per{\lp\F_{q},\phi\rp}\rv}{\lv\P^1\lp\F_{q}\rp\rv}}}$,
\end{itemize}
and let $Q_d$ be the set of prime powers coprime to $d!$.
In \cref{theorems}, we prove:
\begin{theorem}\label{thewinner}
For any $d\in\Z_{\geq2}$,

\vspace{10px}
\hspace{70px}\begin{enumerate*}[\textup{(}a\textup{)}]
\item\label{polynomials}
\,\,$\displaystyle{\lim_{\substack{q\in Q_d\\q\to\infty}}
{\mathcal{P}(d,q)}
=0}$\hspace{25px}\text{and}\hspace{25px}
\item\label{rationals}
\,\,$\displaystyle{\lim_{\substack{q\in Q_d\\q\to\infty}}
{\mathcal{R}(d,q)}
=0}$.
\end{enumerate*}
\end{theorem}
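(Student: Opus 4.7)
My plan is to translate the question, via the specialization theorem advertised in the abstract, into a Chebotarev-type problem about the generic Galois groups of the dynatomic polynomials of a universal degree-$d$ polynomial (for (a)) or rational function (for (b)).

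Let $U$ be the affine $\F_q$-scheme whose $\F_{q^j}$-points parametrize the relevant degree-$d$ maps, and let $\Phi\in\Frac\lp\mathcal{O}(U)\rp(X)$ be the universal such function. For each $n\geq1$, let $\Psi_n\in\mathcal{O}(U)[X]$ denote the $n$-th dynatomic polynomial of $\Phi$, and set $G_n=\Gal\lp\Psi_n/\Frac\lp\mathcal{O}(U)\rp\rp$. Given $\phi\in U\lp\F_{q^j}\rp$, the points of exact period $n$ of $\phi$ in $\F_{q^j}$ are the Frobenius-fixed geometric roots of the specialization of $\Psi_n$ at $\phi$. The announced specialization theorem, applied to the higher-dimensional $\F_q$-algebra $\mathcal{O}(U)$, shows that the $\F_{q^j}$-Galois group of this specialization equals $G_n$ for all but a $o(1)$-fraction of $\phi$ as $j\to\infty$, and Chebotarev equidistribution then yields
\[T_n^{(j)}\colonequals\E_\phi\lc\lv\lb a\in\F_{q^j}:a\text{ has exact period }n\text{ for }\phi\rb\rv\rc\longrightarrow c_n\quad\text{as }j\to\infty,\]
where $c_n$ is the number of $G_n$-orbits on the geometric roots of $\Psi_n$.

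Since $\lv\Per\lp\F_{q^j},\phi\rp\rv=\sum_n\lv\lb a:a\text{ has exact period }n\rb\rv$, dividing by $q^j+1$ and splitting at a cutoff $N=\lfloor\log_d\lp q^j\rp/2\rfloor$ gives
\[\E_\phi\lc\frac{\lv\Per\lp\F_{q^j},\phi\rp\rv}{q^j+1}\rc=\frac{1}{q^j+1}\sum_{n\leq N}T_n^{(j)}+\frac{1}{q^j+1}\sum_{n>N}T_n^{(j)}.\]
The first sum is bounded via $T_n^{(j)}\leq\deg\Psi_n\leq d^n+1$, giving $\sum_{n\leq N}T_n^{(j)}=O\lp d^N\rp=O\lp q^{j/2}\rp$, which divided by $q^j+1$ vanishes as $j\to\infty$. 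The tail is then handled by the same summability argument used by the author in the $d=2$ polynomial case: once $G_n$ is known, $c_n$ together with a second-moment estimate on the $n$-cycle counts gives a good upper bound on $\sum_{n>N}T_n^{(j)}/\lp q^j+1\rp$.

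The central obstacle is the Chebotarev input. First, the specialization theorem itself must be extended from the one-dimensional Dedekind bases of the author's prior work to the $(2d+1)$-dimensional coordinate ring $\mathcal{O}(U)$; this requires upgrading the arithmetic ingredients (discriminant and ramification control, Hilbert-irreducibility-type statements) to higher-dimensional parameter spaces. Second, one must compute, or at least suitably bound, the generic Galois group $G_n$ for the universal degree-$d$ rational function, presumably by showing it is the full wreath product $C_n\wr S_{N_n}$ via a geometric monodromy calculation together with an induction along the periodic tower. This size of $G_n$ ensures $c_n$ is small (indeed $c_n=1$ if $G_n$ is transitive on roots), which in turn makes the tail estimate from the previous paragraph go through.
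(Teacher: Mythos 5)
Your proposal takes a genuinely different route from the paper, and it has gaps that the paper's own strategy is designed to avoid.

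The paper does \emph{not} decompose $\Per{\lp\F_{q^j},\phi\rp}$ by exact period, and it does not use dynatomic polynomials at all. Instead it exploits the elementary containment $\Per{\lp\F_{q^j},\phi\rp}\subseteq\phi^n\lp\P^1\lp\F_{q^j}\rp\rp$ for every $n\geq1$, so that it suffices to show that the expected image proportion $\lv\phi^n\lp\P^1\lp\F_{q^j}\rp\rp\rv/\lv\P^1\lp\F_{q^j}\rp\rv$ is small for some fixed $n$. The relevant Galois group is therefore $\Gal{\lp L/K\rp}$ where $L$ is the splitting field of $\phi^n(X)-t$ over $K=k(t)$ (an iterated-preimage Galois group), and for the generic polynomial/rational function this group is the full iterated wreath product $[S_d]^n$ by Juul's theorem, with $\fix_K{\lp\phi^n(X)-t\rp}<2/(n+2)\to0$. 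Fix $n$ large, then send $j\to\infty$, and the specialization theorem plus effective Chebotarev finish. No cutoff $N$, no tail, no dynatomic polynomials.

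The gaps in your approach: first, you need the generic Galois group $G_n$ of the $n$-th \emph{dynatomic} polynomial of the universal degree-$d$ map. This is a substantially harder object than the iterated-preimage Galois group; it is not computed in the literature cited by the paper, and the guess $C_n\wr S_{N_n}$ is not something you can just assert — the whole point of the image-containment trick is to sidestep this. Second, your tail estimate attributes to the author's $d=2$ work a ``summability argument'' over exact periods that does not exist there: \cite{G} also bounds image sizes, not exact-period counts, so there is no such argument to borrow. Third, even granting $G_n$, turning ``$c_n$ small'' into a bound on $\sum_{n>N}T_n^{(j)}/(q^j+1)$ requires uniformity of the error term over $n$ growing with $j$, which the effective Chebotarev bound does not obviously provide (the error in the \hyperref[effCheb]{Effective Image Size Theorem} grows with $n$ and $\lv G\rv$); the paper avoids this entirely by fixing $n$ before sending $j\to\infty$. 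In short: the single inclusion $\Per\subseteq\mathrm{Im}(\phi^n)$ is the key idea you are missing, and without it you are forced into a much harder Galois-theoretic problem plus an unsupported tail estimate.
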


In previous work \cite{G}, we explicitly bounded $\mathcal{P}(d,q)$, but only in the case where $d=2$.
Specifically, \cite[Corollary~1.1]{G} holds when $q$ is the power of an odd prime $p$, stating that if $q>p^{6\log{p}}$, then
\[
\mathcal{P}\lp2,q\rp<\frac{22}{\log{\log{q}}}.
\]
\cref{thewinner}, on the other hand, holds for both polynomials and rational functions and allows for arbitrarily large degrees.
We pause to remark that \cite[Corollary~1.1]{G} \emph{does not} imply the $d=2$ case of \hyperref[thewinner]{Theorem~\ref*{thewinner}~\ref*{polynomials}}, since it only hold when $q$ is large with respect to $p$.
That is, the limits of \cref{thewinner} converge uniformly across all but finitely many primes.

We now describe the methods, results, and organization of this paper, after pausing to introduce some notation.
Suppose that $R$ is a commutative ring.
If $\p\in\Spec{(R)}$, we write $[R]_\p$ for $\Frac{\lp R/\p\rp}$, and if $f\in R[X]$, we write $[f]_\p$ for the image of $f$ under the $R$-algebra morphism
\[
R[X]\twoheadrightarrow\lp R/\p\rp[X]\hookrightarrow[R]_\p[X];
\]
similarly, if $a\in R$, we write $[a]_\p$ for the image of $a+\p$ under the canonical injection $R/\p\hookrightarrow[R]_\p$.
Now let $k=\Frac{(R)}$ and suppose $\phi\in k(X)$, say with polynomials $g,h\in R[X]$ satisfying $\phi=g/h$ and $[h]_\p\neq0$.
In this situation, we write $[\phi]_\p$ for $[g]_\p/[h]_\p$.
Given such an $R$ and $\phi\in k(X)$, we see that $\Spec{(R)}$ parameterizes a family of dynamical systems: indeed, to $\p\in\Spec{(R)}$ we associate the dynamical system $\lp\P^1\lp[R]_\p\rp,[\phi]_\p\rp$.
We say that $\phi$ has \emph{good reduction at $\p$} if $\deg{[\phi]_\p}=\deg{\phi}$.

Before mentioning the main result of \cref{highdimension}, we recall that an integral domain $D$ is
\begin{itemize}
\item
\emph{residually finite} if for any nonzero ideal $I$ of $D$, we have that $D/I$ is finite and
\item
a \emph{Goldman domain} if $\Frac{(D)}$ is a finitely generated $D$-algebra\footnote{If $D$ is Noetherian (which it will be in this paper), then $D$ is a Goldman domain if and only if it has Krull dimension at most one and contains only finitely many maximal ideals}.
\end{itemize}
Next, we must extend standard definitions for polynomials to include rational functions.
If $k$ is a field and $\phi\in k(X)$, we say $\phi$ is \emph{separable over $k$} if there exist $f,g\in k[X]$ such that $\phi=f/g$ and $f$ is separable over $k$.
Similarly, if $L$ is any extension of $k$, we say that $\phi$ \emph{splits in $L$} if there exist $f,g\in k[X]$ such that $\phi=f/g$ and $f$ splits in $L$.
Any field of minimal degree in which $\phi$ splits is a \emph{splitting field} of $\phi$.
Finally, we introduce some notation for group actions.
If $\alpha$ is an action of a finite group $G$ on a set $S$, we will write
\begin{itemize}
\item
$\Fix{(\alpha)}\colonequals\lb\sigma\in G\mid\text{there exists }a\in S\text{ such that }\alpha(\sigma)(a)=a\rb$ and
\item
$\fix{(\alpha)}\colonequals\lv\Fix{(\alpha)}\rv/\lv G\rv$.
\end{itemize}
To extend this notation to Galois actions, for any field $K$ and separable $\psi\in K(X)$, let $L$ be a splitting field of $\psi$ over $K$ and $\alpha$ be the action of $\Gal{\lp L/K\rp}$ on the roots of $\psi$ in $L$; we will write $\fix_K{(\psi)}$ for $\fix{(\alpha)}$, since this quantity depends only on $K$ and $\psi$.
In \cref{highdimension}, we prove \cref{onenn}.
\begin{theorem}\label{onenn}
Suppose
\begin{itemize}
\item
$D$ is a residually finite Dedekind domain that is not a Goldman domain and
\item
$R$ is a finitely-generated $D$-algebra that is an integrally closed domain.
\end{itemize}
Write $k$ for the field of fractions of $R$, let $\phi\in k(X)$, and let $n\in\Z_{\geq1}$.
Write
\begin{itemize}
\item
$K$ for $k(t)$ and
\item
$d$ for $\deg{\phi}$.
\end{itemize}
Suppose that
\begin{enumerate}[\textup{(}1\textup{)}]
\item\label{gottasep}
$\phi^\prime(X)\neq0$,
\end{enumerate}
so that $\phi^n(X)-t$ is separable over $K$ by \cref{sepduh}, and let
\begin{itemize}
\item
$L$ be the splitting field of $\phi^n(X)-t$ over $K$ and
\item
$G$ be the Galois group of $L/K$.
\end{itemize}
If
\begin{enumerate}[\textup{(}1\textup{)}]
\setcounter{enumi}{1}
\item\label{gottabeclosed}
$k$ is algebraically closed in $L$, and
\item\label{gottatame}
either $\Char{(k)}=0$ or $\gcd{\lp\Char{(k)},\lv G\rv\rp}=1$,
\end{enumerate}
then there exists a nonempty open subset $U_{\phi,n}$ of $\Spec{(R)}$ with the property that if $\m$ is a maximal ideal in $U_{\phi,n}$, then $\phi$ has good reduction at $\m$ and
\[
\lv\frac{\lv[\phi]_\m^n\lp\P^1\lp[R]_\m\rp\rp\rv}
{\lv\P^1\lp[R]_\m\rp\rv}-\fix_K{\lp\phi^n(X)-t\rp}\rv
<\frac{7nd\lv G\rv}{\lv[R]_\m\rv^{3/2}}\cdot\fix_K{\lp\phi^n(X)-t\rp}.
\]
\end{theorem}
\noindent It is our work in \cref{algebra} that ensures the hypotheses on $R$ in \cref{onenn} guarantee that for any maximal ideal $\m$ of $R$, the residue field $[R]_\m$ is a finite field; in other words, the displayed fractions in \cref{onenn} are well-defined thanks to our work in \cref{algebra}.
\cref{onenn} generalizes \cite[Theorem~3.4]{G}, which holds only when $R=D$.
The proof of \cref{onenn} requires taking mild care with Zariski-open subsets of $\Spec{(R)}$.
It is \cref{onenn} that allows us to prove \cref{thewinner} in \cref{theorems}.
In fact, the proof of \cref{thewinner} requires two steps: one using the case of \cref{onenn} where $D$ is the integers (to address all but finitely many primes) and one using the case where $D$ is a polynomial ring over a finite field (to address one by one most of the remaining primes).
In both cases, we use the Lang-Weil bound on numbers points of proper subvarieties over a finite field~\cite{LW}.

\section{Previous work}
\label{algebra}

Before proceeding, we give a very short description of previous work on the statistics of image sizes and periodic points of dynamical systems.
In \cite[Proposition~5.3]{JKMT}, the authors proved a version of \cref{onenn}, one in which $D=R$ is the ring of integers of a number field and in which the error bound is inexplicit.
Juul made this bound explicit \cite[Theorem~2.1]{JuulP}, by computing fixed point proportions of many group actions \cite[Theorem~1.1]{Juul} and by computing height bounds on iterates of critical points of polynomial dynamical systems over number fields \cite[Theorem~1.5]{JuulP} .
In \cite[Theorem~3.4]{G}, we extended this work to include the case where $D=R$ is a residually finite Dedekind domain.
Moreover, in \cite[Theorem~5.6]{G}, we generalized the height bounds of Juul to arbitrary global fields.
This work allowed for the computation of periodic proportions of families of rational functions of fixed positive characteristic, leading to the previously-mentioned \cite[Corollary~1.1]{G}, for example.

This previous work relied on $D=R$ versions of \cref{onenn}.
This constraint led to statistical results on one-parameter families of dynamical systems (often a family parameterized by a polynomial $x^d+c\in k[X]$, where $d\in\Z_{\geq2}$, $k$ is a global field, and $c\in k$).
In this paper, we use \cref{onenn} to parameterize arbitrary many coefficients of polynomials (and rational functions) of arbitrarily large degrees.
To do so, we use the fact that \cref{onenn} allows for $D$-algebras $R$ of arbitrarily large rank (although the proof of \cref{thewinner} only requires the case where $R$ is a finitely-generated \emph{free} $D$-algebra).
The proof of \cref{thewinner} requires two applications of \cref{onenn}: once with $D=\Z$ to show \cref{thewinner} holds for all but finitely many primes and once with $D=\F_q[r]$ with $\gcd{(q,d!)}=1$ to account for most of the remaining primes one by one.

\section{Algebraic preliminaries}
\label{algebra}

The purpose of this section is to prove \cref{thingsarefinite}, which provides conditions ensuring that certain residue fields of integral domains are finite.

\begin{lemma}\label{notfingen}
Suppose $D$ is a Noetherian domain,
that $R$ is a $D$-algebra, that $\m$ is a maximal ideal of $R$, and that $[R]_\m$ is a finitely-generated $D$-algebra.
If $\m\cap D=\lb0\rb$, then $D$ is a Goldman domain.
\end{lemma}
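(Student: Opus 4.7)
The plan is to reduce the statement to the classical descent theorem for Goldman domains (a.k.a.\ G-domains in Kaplansky's terminology). The argument will split into three short steps.

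First, because $\m$ is a maximal ideal of $R$, the quotient $R/\m$ is already a field, so $[R]_\m=\Frac{(R/\m)}=R/\m$. Writing $K\colonequals R/\m$, the assumption $\m\cap D=\lb0\rb$ forces the composite $D\to R\twoheadrightarrow K$ to be injective, so $D$ embeds into the field $K$, and by hypothesis $K$ is a finitely-generated $D$-algebra.

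Second, I would invoke (or quickly prove) the following classical fact: if a domain $A$ is contained in a field $K$ such that $K$ is a finitely-generated $A$-algebra, then $A$ is a Goldman domain. Applied to the pair $D\subseteq K$ produced above, this is precisely the conclusion. For completeness, here is how the classical fact goes. Writing $K=A\lc x_1,\ldots,x_n\rc$, Zariski's lemma for the extension $\Frac{(A)}\subseteq K$ shows that $K/\Frac{(A)}$ is finite algebraic, so each $x_i$ satisfies an algebraic relation over $\Frac{(A)}$. Clearing a common denominator $d\in A\setminus\lb0\rb$ from all these relations makes each $x_i$ integral over $A\lc 1/d\rc$, so $K$ is integral over $A\lc 1/d\rc$. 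Because $K$ is a field and an integral extension of a domain by a field forces the base to be a field, $A\lc 1/d\rc$ is itself a field; since $A\lc 1/d\rc\subseteq\Frac{(A)}\subseteq K$, this yields $\Frac{(A)}=A\lc 1/d\rc$, exhibiting $\Frac{(A)}$ as a finitely-generated $A$-algebra.

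I foresee no real obstacle: the only nontrivial ingredient is Zariski's lemma, the standard tool for field extensions of this shape, and everything else is routine manipulation of localizations and integral extensions. The Noetherian hypothesis on $D$ plays no role in the argument above; it presumably persists for compatibility with later lemmas in \cref{algebra}.
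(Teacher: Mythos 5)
Your proof is correct, and it takes a genuinely different route from the paper's. The paper also begins by observing that $\m\cap D=\lb0\rb$ makes $[R]_\m$ a field extension of $\Frac{(D)}$ and invokes Zariski's lemma to see that this extension is finite; but it then concludes by applying the Artin--Tate lemma to the chain $D\subseteq\Frac{(D)}\subseteq[R]_\m$, and it is exactly here that the Noetherian hypothesis on $D$ is used. You instead bypass Artin--Tate by running the classical Kaplansky-style argument directly: write $K=D[x_1,\ldots,x_n]$, use Zariski's lemma to see each $x_i$ is algebraic over $\Frac{(D)}$, clear a common denominator $d\in D\setminus\lb0\rb$ to make each $x_i$ integral over $D[1/d]$, deduce that the field $K$ is integral over $D[1/d]$, hence that $D[1/d]$ is itself a field, and therefore $\Frac{(D)}=D[1/d]$ is a finitely-generated $D$-algebra. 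This is slightly longer if one had Artin--Tate available as a black box, but it has the advantage of not requiring $D$ to be Noetherian at all — so you have in fact proved a marginally stronger statement than the one the paper's proof establishes, and your closing observation that Noetherianity plays no role in this lemma is correct. (One small cosmetic point: your use of $[R]_\m=R/\m$ because $\m$ is maximal is fine, but the paper works uniformly with $[R]_\p=\Frac{(R/\p)}$ for any prime; both conventions agree here.)
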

\begin{proof}
Since $\m\cap D=\lb0\rb$, we know that $[R]_\m$ is a field extension of $\Frac{(D)}$.
We claim this extension is finite; indeed, the hypothesis that $[R]_\m$ is a finitely-generated $D$-algebra allows us to apply Zariski's Lemma~\cite{Z}.
Since $D$ is Noetherian, we conclude by applying the Artin-Tate lemma~\cite{AT} to the chain $D\subseteq\Frac{(D)}\subseteq[R]_\m$.
\end{proof}

\begin{lemma}\label{thingsarefinite}
Suppose that $D$ is a residually finite Dedekind domain that is not a Goldman domain, that $R$ is a $D$-algebra, and that $\m$ is a maximal ideal of $R$.
If $[R]_\m$ is a finitely-generated $D$-algebra, then $[R]_\m$ is a finite field.
\end{lemma}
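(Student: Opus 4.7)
My plan is to combine \cref{notfingen} (used contrapositively) with Zariski's Lemma and the two hypotheses on $D$ (Dedekind, residually finite) to pin $[R]_\m$ down as a finite extension of a finite residue field of $D$.

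First, I would note that since $\m$ is maximal, $R/\m$ is already a field, so $[R]_\m=\Frac(R/\m)=R/\m$. Let $\p=\m\cap D$. This is a prime ideal of $D$, because the composition $D\to R\to R/\m$ factors through $D/\p$, embedding $D/\p$ into the domain $R/\m$. Because $D$ is assumed \emph{not} to be a Goldman domain, \cref{notfingen} forbids $\p=\lb 0\rb$; hence $\p$ is a nonzero prime of $D$.

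Second, since $D$ is a Dedekind domain, every nonzero prime of $D$ is maximal, so $\p$ is maximal. Residual finiteness of $D$ then tells us the field $D/\p$ is finite. The $D$-algebra structure on $[R]_\m$ factors through $D/\p$, so the hypothesis that $[R]_\m$ is a finitely-generated $D$-algebra upgrades to the statement that $[R]_\m$ is a finitely-generated $(D/\p)$-algebra.

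Finally, I apply Zariski's Lemma~\cite{Z} to the field extension $D/\p\subseteq[R]_\m$, which is finitely generated as a $(D/\p)$-algebra. This yields that $[R]_\m$ is a finite-degree extension of $D/\p$, and since $D/\p$ itself is finite, $[R]_\m$ is a finite field, as desired.

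There is no genuine obstacle here: the argument is essentially mechanical once \cref{notfingen} is in hand. The only point requiring some care is ensuring $\p$ is maximal (not merely prime), which is precisely where the Dedekind hypothesis on $D$ is used; without it, $D/\p$ need not be a field and the appeal to Zariski's Lemma would not close out.
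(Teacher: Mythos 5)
Your proposal is correct and follows essentially the same route as the paper's own proof: apply \cref{notfingen} contrapositively to see $\m\cap D\neq\lb0\rb$, invoke the Dedekind hypothesis to promote that nonzero prime to a maximal ideal, use residual finiteness to make its residue field finite, and then conclude via the finite-generation hypothesis that $[R]_\m$ is a finite extension of that finite field. The only cosmetic difference is that you name Zariski's Lemma explicitly in the final step, whereas the paper leaves that appeal implicit (having already cited it in the proof of \cref{notfingen}).
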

\begin{proof}
Since $D$ is not a Goldman domain, we apply \cref{notfingen} to deduce that $\m\cap D\neq\lb0\rb$; thus, the hypothesis that $D$ is a Dedekind domain ensures that $\m\cap D$ is a maximal ideal of $D$.
The hypothesis that $D$ is residually finite ensures that $[D]_{\m\cap D}$ is a finite field and the hypothesis that $[R]_\m$ is a finitely-generated $D$-algebra implies that the field extension $[R]_\m/[D]_{\m\cap D}$ is finite, so $[R]_\m$ is a finite field as well.
\end{proof}

\begin{example}\label{exercise}
In \cref{thingsarefinite}, the hypothesis that $D$ is not a Goldman domain is necessary.
Indeed, let $S$ be the set of odd integers and take $D=S^{-1}\Z$, so that $D$ is a Goldman domain.
Let $R=D[X]$ and $\m$ the kernel of the (surjective) homomorphism from $R$ to $\Q$ that sends $X$ to $2^{-1}$, so that $\m=(2X-1)R$.
Then we see that
\begin{itemize}
\item
$\m\cap D=\lb0\rb$ and
\item
$[R]_\m\simeq\Q$ is not a finite field.
\end{itemize}
\end{example}

\section{Image sizes and Galois groups}
\label{highdimension}

We begin by mentioning \cref{plentygood}, which ensures that most specializations of a rational function preserve its degree.

\begin{fact}\label{plentygood}
Suppose that $R$ is an integral domain with field of fractions $k$ and that $\phi\in k(X)$.
If we write
\[
\mathcal{R}=\lb\p\in\Spec{(R)}\mid
\phi\text{ has good reduction at }\p\rb,
\]
then $\mathcal{R}$ contains a nonempty open subset of $\Spec{(R)}$.
\end{fact}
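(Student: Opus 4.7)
The plan is to produce an explicit nonempty basic open subset $D(r)\subseteq\Spec(R)$ contained in $\mathcal{R}$, where $r\in R\setminus\lb0\rb$ encodes the two obstructions to good reduction: (i) the leading coefficient of the numerator or denominator dropping to zero modulo $\p$, which would lower the degree, and (ii) numerator and denominator acquiring a common factor modulo $\p$, which would also cause cancellation.

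First I would fix a representative $\phi=F/G$ with $F,G\in R[X]$ (clearing denominators from a coprime representation in $k[X]$), where $\gcd(F,G)=1$ in $k[X]$ and $d=\max\lp\lb\deg{F},\deg{G}\rb\rp$. Without loss of generality $\deg{F}=d$; let $a\in R\setminus\lb0\rb$ be the leading coefficient of $F$. Since $k[X]$ is a PID and $F,G$ are coprime there, Bezout yields $U,V\in R[X]$ and $c\in R\setminus\lb0\rb$ with $UF+VG=c$.

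Next I would set $r=ac\in R\setminus\lb0\rb$ and verify that $D(r)\subseteq\mathcal{R}$. For $\p\in D(r)$: since $[a]_\p\neq0$, the leading coefficient of $F$ survives, so $\deg{[F]_\p}=d$ and $\deg{[G]_\p}\leq\deg{G}\leq d$. Reducing Bezout gives $[U]_\p[F]_\p+[V]_\p[G]_\p=[c]_\p$, which is a nonzero constant in $[R]_\p[X]$; hence $\gcd{\lp[F]_\p,[G]_\p\rp}$ divides a unit constant and is itself a unit. This in particular forces $[G]_\p\neq0$ (otherwise the gcd would equal $[F]_\p$, whose degree is $d\geq1$, contradicting unit-hood), so $[\phi]_\p=[F]_\p/[G]_\p$ is defined; and since $[F]_\p,[G]_\p$ are coprime, $\deg{[\phi]_\p}=\max\lp\lb\deg{[F]_\p},\deg{[G]_\p}\rb\rp=d$. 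Hence $\p\in\mathcal{R}$.

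Finally, $D(r)$ is nonempty because $R$ is an integral domain and $r\neq0$ (the generic point $\lp0\rp$ lies in $D(r)$), so $\mathcal{R}$ contains the nonempty open set $D(r)$. The only mildly subtle step is ruling out $[G]_\p=0$ from the gcd condition, which is where the assumption $d\geq1$ is used; the degenerate case $d=0$ (when $\phi$ is constant) can be handled separately by noting that $\phi=a/b$ with $a,b\in R\setminus\lb0\rb$ has good reduction on $D(b)\cup D(a)$ under any reasonable convention for $\deg{0}$, but this case is not needed for the applications in \cref{theorems}.
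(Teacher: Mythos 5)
Your proof is correct and reaches the same shape of conclusion as the paper --- an explicit $r\in R\setminus\{0\}$ with $D(r)\subseteq\mathcal{R}$ --- but constructs $r$ by a different route. The paper simply cites Silverman (Section~2.4 of \cite{Sil}) and takes $r$ to be the resultant of the degree-$d$ homogenizations of the numerator and denominator; any prime of bad reduction must contain that resultant. You instead argue inhomogeneously with $r=ac$, where $a$ is the leading coefficient of the degree-$d$ member of $\{F,G\}$ and $c$ is a B\'ezout constant certifying coprimality in $k[X]$. The two are close relatives --- the Sylvester expansion of the resultant is itself a B\'ezout combination $UF+VG$ --- but your version is self-contained and makes the role of the leading coefficient explicit, which matters: with only the \emph{affine} resultant $\mathrm{Res}_{\deg F,\deg G}(F,G)$ one would miss primes that kill the top coefficient of $F$ without creating a common factor (e.g.\ $\phi=pX^2+X$ over $\Z_{(p)}$, whose affine resultant with $1$ is a unit), whereas the homogeneous resultant in Silverman packages both obstructions at once and your $r=ac$ handles them separately. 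Your step ruling out $[G]_\p=0$ via unit-hood of the gcd is exactly right. One tiny correction to your constant-$\phi$ aside: the good-reduction locus there should be $D(a)\cap D(b)=D(ab)$, not a union, since you need both $[a]_\p\neq0$ and $[b]_\p\neq0$ to keep $[\phi]_\p$ a nonzero constant --- though, as you say, that case is not used.
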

\begin{proof}
This fact follows from~\cite[Section~2.4]{Sil}: for any $f,g\in R[X]$ such that $\phi=f/g$ and $f,g$ have no common factors in $k[X]$, if we let $r\in R\setminus\lb0\rb$ be the resultant of $f$ and $g$, then any $\p\in\Spec{(R)}\setminus\mathcal{R}$ must contain $r$.
\end{proof}

\noindent The following fact was noted in the first paragraph of~\cite[Section~3]{JKMT}.

\begin{fact}\label{sepduh}
Suppose that $k$ is a field and $\phi\in k(X)$.
If $\phi^\prime(X)\neq0$, then for all $n\in\Z_{\geq0}$, the rational function $\phi^n(X)-t$ is separable over $k(t)$.
\end{fact}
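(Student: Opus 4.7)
My plan is to reduce the separability claim to a statement about the derivative $(\phi^n)'$ and then invoke the chain rule. Writing $\phi^n \in k(X)$ in lowest terms as $F/G$ with coprime $F, G \in k[X]$, we have $\phi^n(X) - t = (F(X) - tG(X))/G(X)$, so the definition of separability for rational functions reduces the claim to showing that $H(X) \colonequals F(X) - tG(X) \in k(t)[X]$ is separable over $k(t)$; the case $n = 0$ is trivial since then $H = X - t$, so from now on I assume $n \geq 1$.

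I would first show that $H$ is irreducible in $k(t)[X]$. Viewing $H$ as an element of $k[X, t]$ and treating it as a polynomial in $t$ over $k[X]$, it is linear in $t$ with coefficients $F$ and $-G$, so any factorization in $k[X, t]$ must include a factor in $k[X]$ dividing both $F$ and $G$, forcing that factor to be a unit; Gauss's lemma then transfers irreducibility to $k(t)[X]$. Since an irreducible polynomial over a field is separable iff its derivative is nonzero, it suffices to verify that $H'(X) = F'(X) - tG'(X) \neq 0$ in $k(t)[X]$, which fails iff $F' = G' = 0$ in $k[X]$. In characteristic zero this would contradict $\max(\deg F, \deg G) \geq 1$; in characteristic $p > 0$ it would force $F, G \in k[X^p]$, whence $\phi^n \in k(X^p)$ and so $(\phi^n)'(X) = 0$ in $k(X)$.

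The main step, and the only real obstacle, is thus showing that $\phi'(X) \neq 0$ forces $(\phi^n)'(X) \neq 0$. I would prove this via the chain-rule identity $(\phi^n)'(X) = \prod_{i=0}^{n-1} \phi'(\phi^i(X))$, established by induction on $n$. Since $\phi' \neq 0$ forces $\phi$ to be non-constant, each iterate $\phi^i$ is non-constant, so the substitution $Y \mapsto \phi^i(X)$ defines an injective $k$-algebra embedding $k(Y) \hookrightarrow k(X)$ that sends $\phi'(Y) \neq 0$ to $\phi'(\phi^i(X)) \neq 0$; the product of nonzero elements in the field $k(X)$ is then nonzero, as desired.
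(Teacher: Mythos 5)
The paper does not actually prove this fact; it only cites the first paragraph of \cite[Section~3]{JKMT} for it, so there is no in-paper argument to compare against. Your proposal supplies a correct and self-contained proof, and it is the standard one: writing $\phi^n = F/G$ in lowest terms, the polynomial $H = F(X)-tG(X)$ is irreducible in $k(t)[X]$ because it is linear and primitive in $t$ over $k[X]$ (Gauss's lemma applies since $\deg_X H \geq 1$), so separability reduces to $H' = F' - tG' \neq 0$, which in turn reduces to $(\phi^n)' \neq 0$; the chain-rule factorization $(\phi^n)'(X) = \prod_{i=0}^{n-1}\phi'(\phi^i(X))$ together with the observation that substituting a nonconstant rational function is an embedding $k(Y)\hookrightarrow k(X)$ finishes the job, valid in any characteristic. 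Every step checks out, including the positive-characteristic case where $F'=G'=0$ would force $F,G\in k[X^p]$ and hence $(\phi^n)'=0$.
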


We may now state the \ref{effCheb}, versions of which have appeared many times: see the proof of Proposition~5.3 in~\cite{JKMT}, the proof of Proposition~3.3 in~\cite{JuulFPP}, and~\cite[Theorem~2.1]{JuulP}.
The version we state is a special case of \cite[Theorem~2.1]{JuulP} and is based on an effective form of the Chebotarev Density Theorem; namely, \cite[Proposition~6.4.8]{FJ}, though certain cases date back to~\cite{Reich,Rhyp}.

\begin{named}{Effective Image Size Theorem}\label{effCheb}
Suppose that $q$ is a prime power, that $\phi\in\F_q(X)$, and that $n\in\Z_{\geq1}$.
Suppose that $\phi^\prime(X)\neq0$ and let
\begin{itemize}
\item
$d=\deg{\phi}$,
\item
$L$ be a splitting field of $\phi^n(X)-t$ over $\F_q(t)$, and
\item
$G=\Gal{\lp L/\F_q(t)\rp}$.
\end{itemize}
If
\begin{itemize}
\item
$L/\F_q(t)$ is tamely ramified, and
\item
$\F_q$ is algebraically closed in $L$,
\end{itemize}
then
\[
\lv\frac{\lv\phi^n\lp\P^1\lp\F_q\rp\rp\rv}{\fix_{\F_q(t)}{\lp\phi^n(X)-t\rp}}-\lv\P^1\lp\F_q\rp\rv\rv
<\frac{7nd\lv G\rv}{q^{1/2}}.
\]
\end{named}

Before proving \cref{onenn}, we pause to introduce some further notation used in its proof.
Suppose that $A$ is an integrally closed domain and $L$ is a Galois extension of $\Frac{(A)}$, then write $B$ for the integral closure of $A$ in $L$.
If $\q$ is a prime ideal of $A$ and $\qover$ is a prime ideal of $B$ lying over $\q$, we will write
\[
D_{L,A}\lp\qover\vert\q\rp
\]
for the decomposition group of $\qover$ over $\q$ and
\[
\delta_{\qover\vert\q}\colon D_{L,A}\lp\qover\vert\q\rp\to\Aut_{[A]_\q}{\lp[B]_\qover/[A]_\q\rp}
\]
for the associated surjective homomorphism of groups.
With this notation, we state the \hyperref[GUT]{Galois Uniformity Theorem} (a special case of \cite[Corollary~3.3]{G}), which is the final tool needed for the proof of \cref{onenn}.

\begin{named}{Galois Uniformity Theorem}\label{GUT}
Keep the hypotheses of \cref{onenn}, write $A$ for $R[t]$, and write $B$ for the integral closure of $A$ in $L$.
Then subset of $\Spec{(R)}$ consisting of those primes $\p$ such that
\begin{itemize}
\item
$\p B$ is prime,
\item
$[B]_{\p B}/[A]_{\p A}$ is Galois,
\item
$\delta_{\p B\vert\p A}$ is an isomorphism, and
\item
$[R]_\p$ is algebraically closed in $[B]_{\p B}$
\end{itemize}
contains a nonempty open subset of $\Spec(R)$.
\end{named}


\begin{proof}[Proof of \cref{onenn}]
Write $A$ for $R[t]$ and $B$ for the integral closure of $A$ in $L$.
By the \hyperref[GUT]{Galois Uniformity Theorem} and \cref{plentygood}, there exists a nonempty open subset $U_1$ of $\Spec{(R)}$ such that for all $\p\in U_1$,
\begin{itemize}
\item
$\p B$ is prime, the extension $[B]_{\p B}/[A]_{\p A}$ is Galois, and $\delta_{\p B\vert\p A}$ is an isomorphism,
\item
$[R]_\p$ is algebraically closed in $\lc B\rc_{\p B}$, and
\item
$\phi$ has good reduction at $\p$.
\end{itemize}
For any such prime $\p$, since $[B]_{\p B}/[A]_{\p A}$ is the Galois splitting field of the irreducible polynomial $[\phi]_{\p A}^n\lp X\rp-[t]_{\p A}$, we know that $[\phi]_{\p A}^n\lp X\rp-[t]_{\p A}$ is separable.
As $\p B$ is prime, we know that $D_{L,A}\lp\p B\vert\p A\rp=\Gal{(L/K)}$, so $\delta_{\p B\vert\p A}$ is an isomorphism.
This isomorphism induces a conjugacy of the action $G$ on the roots of $\phi^n(X)-t$ and the action of $\Gal{\lp[B]_{\p B}/[A]_{\p A}\rp}$ on the roots of $[\phi]_{\p A}^n\lp X\rp-[t]_{\p A}$, so the fixed-point proportion of these actions are preserved.
In particular, if $\p\in U_1$, then
\[
\fix_K{\lp\phi^n\lp X\rp-t\rp}=\fix_{[A]_{\p A}}{\lp[\phi]_{\p A}^n\lp X\rp-[t]_{\p A}\rp}.
\]

Next, let $U_2$ be the open subset of $\Spec{(R)}$ consisting of those ideals that do not contain $\lv G\rv$.
By hypothesis~\ref{gottatame}, we see that $U_2$ is nonempty; for any maximal ideal $\m\in U_2$, we know by \cref{thingsarefinite} that $[R]_\m$ is finite, and by our choice of $U_2$, we see $\gcd{\lp\Char{\lp[R]_\m\rp},\lv G\rv\rp}=1$.
Thus, we set $U_{\phi,n}=U_1\cap U_2$ to ensure that for any maximal ideal $\m\in U_{\phi,n}$, the extension $\lc B\rc_{\p B}/[A]_{\p A}$ is tamely ramified.
Moreover, for all such $\m$, we know that $d=\deg{\lp[\phi]_\p\rp}$ since $\phi$ has good reduction at $\m$; the conclusion now follows from the \ref{effCheb}.
\end{proof}

\section{Periodic points of rational functions over finite fields}
\label{theorems}

Before proving \cref{thewinner}, we introduce some objects used in the proof, then state a useful fact.
If $\F$ is any field and $d\in\Z_{\geq1}$, we will choose $r_d,r_{d-1},\ldots,r_0,s_d,s_{d-1},\ldots,s_0$ independent transcendental elements over $\F$ and write
\[
k^{\text{poly}}_{\F,d}\colonequals\F\lp r_d,\ldots,r_0\rp
\]
and
\[
k^{\text{rat}}_{\F,d}\colonequals\F\lp r_d,\ldots,r_0,s_d,\ldots,s_0\rp;
\]
these are the fields we use to parameterize the coefficients of polynomials and rational functions over $\F$.
For the generic polynomial and rational function, we will write
\[
\pi_{\F,d}
=\pi_{\F,d}\lp r_d,r_{d-1},\ldots,r_0\rp(X)
\colonequals r_dX^d+r_{d-1}X^{d-1}+\cdots+r_0
\in k^{\text{poly}}_{\F,d}[X]
\]
and
\[
\rho_{\F,d}
=\rho_{\F,d}\lp r_d,r_{d-1},\ldots,r_0,s_d,s_{d-1},\ldots,s_0\rp(X)
\colonequals\frac{r_dX^d+r_{d-1}X^{d-1}+\cdots+r_0}
{s_dX^d+s_{d-1}X^{d-1}+\cdots+s_0}
\in k^{\text{rat}}_{\F,d}(X).
\]
Finally, write $K^{\text{poly}}_{\F,d}=k^{\text{poly}}_{\F,d}(t)$ and $K^{\text{rat}}_{\F,d}=k^{\text{rat}}_{\F,d}(t)$, then for $n\in\Z_{\geq0}$, let $L^{\text{poly}}_{k,d,n}$ and $L^{\text{rat}}_{k,d,n}$ be the splitting fields of $\pi_{k,d}^n(X)-t$ and $\rho_{k,d}^n(X)-t$ over $K^{\text{poly}}_{k,d}$ and $K^{\text{rat}}_{k,d}$, respectively.

Next, we recall the definition of wreath products, which we will use in the statement and proof of \cref{specializing}.
Suppose that $G,H$ are finite groups and $T$ is a finite set.
If $\tau\colon H\to \Aut_{\textsf{Set}}{(T)}$ is an action of $H$ on $T$, then we use the coordinate permutation action of $H$ on $G^{\lv T\rv}$ to construct the group $G^{\lv T\rv}\rtimes H$; we denote this group by $G\wr_\tau H$.
Now, if $S$ is a another finite set and $\sigma$ is an action of $G$ on $S$, then $G\wr_\tau H$ acts on $S\times T$ via
\[
\lp\lp g_i\rp_{i\in T},h\rp\colon(s,t)\mapsto(\sigma\lp g_t\rp(s),\tau(h)(t));
\]
we denote this action by $\sigma\wr\tau$.

We may also iterate the wreath product: if $G$ is a finite group and $\sigma$ is an action of $G$ on a finite set $S$,
\begin{itemize}
\item
write $[G]^1$ for $G$ and $[\sigma]^1$ for $\sigma$, and
\item
for any $n\in\Z_{\geq2}$, write $[G]^n$ for $[G]^{n-1}\wr_{\sigma} G$ and $[\sigma]^n$ for $[\sigma]^{n-1}\wr\sigma$.
\end{itemize}
Thus, for all $n\in\Z_{\geq1}$, we see that $[\sigma]^n$ is an action of $[G]^n$ on $S^n$; in this situation, we will write $\Fix_n{\lp\sigma\rp}$ and $\fix_n{(\sigma)}$ for $\Fix{\lp[\sigma]^n\rp}$ and $\fix{\lp[\sigma]^n\rp}$, respectively.

We now prove \cref{specializing}, which follows almost immediately from \cite[Theorem~1.1]{Juul} and \cite[Proposition~4.5]{JuulP}.

\begin{lemma}\label{specializing}
Let $k$ be a field, let $d\in\Z_{\geq2}$, and let $n\in\Z_{\geq1}$.
Then
\[
\Gal{\lp L^{\textup{rat}}_{k,d,n}/K^{\textup{rat}}_{k,d}\rp}
\simeq\lc S_d\rc^n
\]
and
\[
\fix_{K^{\textup{rat}}_{k,d}}{\lp\rho_{k,d}^n(X)-t\rp}<\frac{2}{n+2}.
\]
If $(d,\Char{(k)})\neq(2,2)$, then
\[
\Gal{\lp L^{\textup{poly}}_{k,d,n}/K^{\textup{poly}}_{k,d}\rp}
\simeq\lc S_d\rc^n.
\]
and
\[
\fix_{K^{\textup{poly}}_{k,d}}{\lp\pi_{k,d}^n(X)-t\rp}<\frac{2}{n+2}.
\]
\end{lemma}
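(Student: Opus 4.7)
The plan is to invoke the two cited results in sequence: \cite[Theorem~1.1]{Juul} to identify the Galois groups together with their natural actions on roots, and then \cite[Proposition~4.5]{JuulP} to bound the proportion of those group elements with a fixed point. The polynomial and the rational-function cases are handled almost identically, so I would treat them in parallel.

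For the group isomorphisms, I would appeal directly to \cite[Theorem~1.1]{Juul}, which computes the arboreal Galois group of the $n$-th iterate of the generic degree-$d$ rational function (respectively polynomial) over the generic base point $t$ and shows that it is the full $n$-fold iterated wreath product $\lc S_d\rc^n$. In the polynomial case the hypothesis $(d,\Char{(k)})\neq(2,2)$ appears because a generic quadratic polynomial in characteristic $2$ is conjugate to one of the form $X^2+c$, whose arboreal Galois groups are strictly smaller than $\lc S_2\rc^n$; in every other case the full wreath product is attained. Combining this identification with the canonical bijection between the $d^n$ roots of $\phi^n(X)-t$ and the level-$n$ leaves of the $d$-ary preimage tree, I would also obtain that the Galois action on those roots is conjugate to the iterated wreath action $\lc\sigma\rc^n$ defined just before the lemma, where $\sigma$ denotes the standard action of $S_d$ on a $d$-element set.

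With the action pinned down, the definitions of $\fix_K$ and of $\fix_n$ yield at once the identity
\[
\fix_{K^{\textup{rat}}_{k,d}}{\lp\rho_{k,d}^n(X)-t\rp}=\fix_n{(\sigma)}=\fix_{K^{\textup{poly}}_{k,d}}{\lp\pi_{k,d}^n(X)-t\rp},
\]
and then \cite[Proposition~4.5]{JuulP} supplies the uniform inequality $\fix_n{(\sigma)}<2/(n+2)$, valid for all $d\in\Z_{\geq2}$ and $n\in\Z_{\geq1}$; substituting this into the displayed equality gives both of the required bounds. The only genuine obstacle, such as it is, is notational: the two Juul papers phrase their results in terms of automorphism groups of rooted trees acting on leaves, whereas the present paper phrases everything through the $\lc\cdot\rc^n$ wreath construction, so one must check carefully that these are the same group with the same action (they are, by induction on $n$). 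Once that bookkeeping is done, the lemma is a direct transcription of the two references, with no further calculation required.
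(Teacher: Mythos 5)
Your outline correctly isolates the two external results driving the lemma, but it elides a step that the paper treats as essential. You claim that \cite[Theorem~1.1]{Juul} ``computes the arboreal Galois group of the $n$-th iterate of the generic degree-$d$ rational function \dots and shows that it is the full $n$-fold iterated wreath product.'' The paper does not apply Juul's theorem directly to $\rho_{k,d}$ or $\pi_{k,d}$; it applies it to the specializations $\rho_{k,d}(1,r_{d-1},\ldots,s_0)$ and $\pi_{k,d}(1,r_{d-1},\ldots,r_0)$ obtained by setting the leading coefficient $r_d=1$. That substitution matters because the cited theorem is stated for a normalized (monic-numerator) family, not the fully generic one. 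The paper therefore runs a sandwich: \cite[Lemma~2.5]{JKMT} gives the upper bound that each Galois group embeds into $\lc S_d\rc^n$, and Juul's theorem applied to the monic specialization gives a subquotient isomorphic to $\lc S_d\rc^n$; finiteness then forces equality. Your proposal skips both the specialization and the JKMT upper bound, so as written it asserts more of \cite[Theorem~1.1]{Juul} than that result delivers. This is the one genuine gap; once the sandwich is put in, your account of passing from the group identification to the fixed-point bound via \cite[Proposition~4.5]{JuulP} matches the paper.

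A smaller point: your explanation of the exceptional case $(d,\Char(k))=(2,2)$ is off. In characteristic $2$ a generic quadratic $aX^2+bX+c$ is \emph{not} affinely conjugate to $X^2+c'$ (the linear term cannot be killed, since $2aX\beta$ vanishes); the obstruction is rather that the hypotheses of the cited results fail in that case, which is why the lemma simply excludes it. Since this clause is a hypothesis of the statement rather than something you must derive, this does not affect the validity of the argument, but the heuristic you gave is not the right one.
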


\begin{proof}
By \cite[Lemma~2.5]{JKMT}, we know that $\Gal{\lp L^{\textup{rat}}_{k,d,n}/K^{\textup{rat}}_{k,d}\rp}$ and $\Gal{\lp L^{\textup{poly}}_{k,d,n}/K^{\textup{poly}}_{k,d}\rp}$ are isomorphic to subgroups of $\lc S_d\rc^n$.
Next, we specialize $L^\text{rat}_{k,n,d}/K^\text{rat}_{k,d}$ to $t=0,r_d=1$.
Thanks to \cite[Theorem~1.1]{Juul}, we know the Galois group of this specialization is $\lc S_d\rc^n$.
As the Galois group of this specialization is isomorphic to a subgroup of $\Gal{\lp L^{\textup{rat}}_{k,d,n}/K^{\textup{rat}}_{k,d}\rp}$, we see that $\Gal{\lp L^{\textup{rat}}_{k,d,n}/K^{\textup{rat}}_{k,d}\rp}\simeq\lc S_d\rc^n$.
The same argument holds for $\Gal{\lp L^{\textup{rat}}_{k,d,n}/K^{\textup{rat}}_{k,d}\rp}$.

To conclude, if we let $\alpha$ be the usual action of $S_d$ on $\lb1,\ldots,d\rb$, then \cite[Proposition~4.5]{JuulP} states that $\fix_n{(\alpha)}<2/(n+2)$.
\end{proof}

Before proving \cref{thewinner}, we state a lemma we will need in its proof.

\begin{lemma}\label{babystepz}
Suppose that $d\in\Z_{\geq 2}$ and $q$ is a prime power.
If $\gcd{\lp q,d!\rp}=1$, then
\vspace{10px}

\hspace{70px}\begin{enumerate*}[\textup{(}a\textup{)}]
\item\label{babypolynomials}
\,\,$\displaystyle{\lim_{j\to\infty}
{\mathcal{P}\lp d,q^j\rp}
=0}$\hspace{25px}\text{and}\hspace{25px}
\item\label{babyrationals}
\,\,$\displaystyle{\lim_{j\to\infty}
{\mathcal{R}\lp d,q^j\rp}
=0}$.
\end{enumerate*}
\end{lemma}
\noindent We postpone the proof of \cref{babystepz} until after the proof of \cref{thewinner}, since it is similar and easier.

For every prime $p$, fix for the remainder of the paper an algebraic closure of $\F_p$, and for every $j\in\Z_{\geq1}$, let $\F_{p^j}$ be its unique subfield of size $p^j$.
\begin{proof}[Proof of \cref{thewinner}]
We will prove \hyperref[rationals]{\cref*{thewinner}~\ref*{rationals}}.
(The proof of \hyperref[polynomials]{\cref*{thewinner}~\ref*{polynomials}} is similar and easier.)
We write
\begin{align*}
k&=k^\text{rat}_{\Q,d}\\
\rho&=\rho_{\Q,d}\\
D&=\Z\\
R&=\Z\lc r_d,r_{d-1},
\ldots,r_0,s_d,s_{d-1},\ldots,s_0\rc\\
K&=K^\text{rat}_{\Q,d}=\Frac{(R[t])}.
\end{align*}
Now, for any prime $p$ and positive integer $j$, we write
\begin{align*}
V_{p,j}&=\lp\F_{p^j}\rp^{2d+2},\\
W_{p,j}&=\lb\lp a_d,a_{d-1}\ldots,a_0,0,\ldots,0\rp\mid a_d,a_{d-1}\ldots,a_0\in\F_{p^j}\rb\subseteq V_{p,j},\\
R_p&=R/pR\simeq\F_p\lc r_d,r_{d-1},\ldots,r_0,s_d,s_{d-1},\ldots,s_0\rc\\
R_{p,j}&=\F_{p^j}\lc r_d,r_{d-1},\ldots,r_0,s_d,s_{d-1},\ldots,s_0\rc,
\end{align*}
and for any $\mathbf{a}=\lp a_d,a_{d-1}\ldots,a_0,b_d,b_{d-1}\ldots,b_0\rp\in V_{p,j}$, let
\begin{align*}
\F_p\lp\mathbf{a}\rp
&=\F_p\lp a_d,a_{d-1}\ldots,a_0,b_d,b_{d-1}\ldots,b_0\rp,\\
\m_{p,j,\mathbf{a}}
&=\lp r_d-a_d\rp R_{p,j}+\cdots+\lp r_0-a_0\rp R_{p,j}
+\lp s_d-b_d\rp R_{p,j}+\cdots+\lp s_0-b_0\rp R_{p,j},\\
\m_{p,\mathbf{a}}
&=\m_{p,j,\mathbf{a}}\cap R_p.
\end{align*}
Let $\m_\mathbf{a}$ be the unique maximal ideal of $R$ that maps to $\m_{p,\mathbf{a}}\subseteq R_p=R/pR$ under the canonical projection.
Finally, we will write $\rho_p$ for the image of $\rho$ in $R_p[X]$, so that for any $j\in\Z_{\geq1}$, we have $\rho_p\in R_{p,j}[X]$.
With this notation, if $\mathbf{a}\in V_{p,j}\setminus W_{p,j}$, then
\[
\rho_p\lp\mathbf{a}\rp\in\F_{p^j}(X)
\]
and
\[
\tag{i}
\label{reducingisgood}
\lp\F_{p^j},\rho_p(\mathbf{a})\rp\text{ and }\lp[R]_{\m_\mathbf{a}},[\rho]_{\m_\mathbf{a}}\rp\text{ are isomorphic}
\hspace{30px}\text{if and only if}\hspace{30px}\F_p\lp\mathbf{a}\rp=\F_{p^j}.
\]
Moreover, we recall that there are at most $2p^{j/2}$ elements of $\F_{p^j}$ whose minimal polynomial is of degree less than $j$; use this bound
to see that
\[
\tag{ii}
\label{primitive}
\lv\lb\mathbf{a}\in V_{p,j}\mid\F_p\lp\mathbf{a}\rp\neq\F_{p^j}\rb\rv
\leq 2^{2d+2}p^{j(d+1)}.
\]

Let $\epsilon\in\R_{>0}$.
In light of \cref{babystepz}, we need only find a positive integer $J$ with the property that for any prime $p$ and any $j\in\Z_{\geq1}$: if $p>J$ or $j>J$, then $\mathcal{P}\lp d,p^j\rp<\epsilon$.
By \cref{specializing}, we know that for any positive integer $n$,
\[
\Gal{\lp L^{\text{rat}}_{k,d,n}/K\rp}\simeq\lc S_d\rc^n,
\]
so we can use \cref{specializing} to produce a specific positive integer $n$, which we fix the remainder of the proof, with the property that
\[
\fix_K{\lp\rho^n(X)-t\rp}<\frac{\epsilon}{8}.
\]
Let's write $L$ for $L^{\text{rat}}_{k,d,n}$ and $G$ for $\Gal{\lp L/K\rp}$.
It follows from \cite[Proposition~3.6]{JKMT} and \cref{specializing} that $k$ is algebraically closed in $L$.
(Indeed, let $\phi=\rho^n$ and let $M$ be the splitting field of $\phi^2(X)-t$ over $K$.
By \cref{specializing}, we see that $\Gal{(M/K)}\simeq\lc S_d\rc^{2n}$.
But \cite[Proposition~3.6]{JKMT} tells us that if $k$ were not algebraically closed in $L$, then $\Gal{(M/K)}$ would be isomorphic to a proper subgroup of $\lc S_d\rc^{2n}$.)
Moreover, since $\Char{(k)}=0$, we may apply \cref{onenn} to find a nonempty open set $U\subseteq\Spec{(R)}$ with the property that if $\m$ is a maximal ideal in $U$, then $\rho$ has good reduction at $\m$ and
\[
\frac{\lv[\rho]_\m^n\lp\P^1\lp[R]_\m\rp\rp\rv}
{\lv\P^1\lp[R]_\m\rp\rv}
<\fix_K{\lp\rho^n(X)-t\rp}
+\frac{7nd\lv G\rv}{\lv[R]_\m\rv^{3/2}}\cdot\fix_K{\lp\rho^n(X)-t\rp}.
\]
Now, let's choose any $J_0\in\Z_{\geq1}$ such that $J_0>\log{\lp7nd\lv G\rv\rp}$, so that for any prime $p$ and any positive integer $j$: if either $p>7nd\lv G\rv$ or $j> J_0$, then
\[
\frac{7nd\lv G\rv}{(p^j)^{3/2}}<1.
\]
Thus, for any $\m\in U$, say with $\Char{\lp[R]_\m\rp}=p$, if either $p>7nd\lv G\rv$ or $\lc[R]_\m:\F_p\rc>J_0$, then
\[
\tag{iii}
\label{persmall}
\frac{\lv[\rho]_\m^n\lp\P^1\lp[R]_\m\rp\rp\rv}
{\lv\P^1\lp[R]_\m\rp\rv}
<\frac{\epsilon}{4}.
\]
Since $U$ is nonempty, we may choose a nonconstant $f\in R$ such that $V(f)$ contains the complement of $U$.
Then, by~\cite[Theorem~1]{LW}, there exists $A_f\in\R_{>0}$ such that for all primes $p$ that do not divide the content of $f$, and all positive integers $j$,
\[
\tag{iv}
\label{LW}
\lv\lb\av\in V_{p,j}\mid\m_{\av}\nin U\rb\rv<A_fp^{j(2d+1)}.
\]
And of course, if $\av\in V_{p,j}$ and $\m_\av\in U$, then $\av\nin W_{p,j}$, since $\rho$ has good reduction at $\m_\av$.

Finally, choose a positive integer $J$ with
\begin{enumerate}[(i)]
\setcounter{enumi}{4}
\item\label{notsmaller}
$J>\max{\lp\lb 7nd\lv G\rv,j_0,\content{(f)}\rb\rp}$
\end{enumerate}
and with the property that for any prime $p$ and all $j\in\Z_{\geq1}$: if either $p>J$ or $j>J$, then
\begin{enumerate}[(i)]
\setcounter{enumi}{5}
\item\label{mostofthem}
$\lp1-p^{-2j}\rp^{-1}\lp1-p^{-j}\rp^{-1}<2$,
\item\label{LWj}
$A_fp^{-j}\lp1-p^{-2j}\rp^{-1}\lp1-p^{-j}\rp^{-1}<\epsilon/4$, and
\item\label{primitivej}
$p^{-jd}\lp2^{2d+2}\rp\lp1-p^{-2j}\rp^{-1}<\epsilon/4$.
\end{enumerate}
Now, by \cite[Lemma~3.2]{FG} for example, we know that for any such $p,j$,
\[
\lv\lb\phi\in \F_{p^j}(X)\mid\deg{\phi}=d\rb\rv=(p^j)^{2d-1}\lp p^{2j}-1\rp.
\]
Thus, for any prime $p$ and positive integer $j$: if either $p>J$ or $j>J$, then
\begin{align*}
&\frac{1}{\lv\lb\phi\in\F_{p^j}(X)\mid\deg{\phi}=d\rb\rv}
\sum_{\substack{\phi\in\F_{p^j}(X)\\
\deg{\phi}=d}}
{\frac{\lv\Per{\lp\F_{p^j},\phi\rp}\rv}{\lv\P^1\lp\F_{p^j}\rp\rv}}\\
&<\frac{\epsilon}{4}
+\frac{1}{\lp p^j\rp^{2d-1}\lp p^{2j}-1\rp\lp p^j-1\rp}
\hspace{-20px}
\sum_{\substack{\av\in V_j\\
\F_p(\av)=\F_{p^j}\\
\rho\text{ has good reduction at }\m_\av}}
{\hspace{-20px}\frac{\lv\Per{\lp[R]_{\m_\av},[\rho]_{\m_\av}\rp}\rv}
{\lv\P^1\lp[R]_{\m_\av}\rp\rv}}
&&\text{by \hyperref[primitive]{(\ref*{primitive})}, \ref{primitivej}, and \hyperref[reducingisgood]{(\ref*{reducingisgood})}}\\
&<\frac{\epsilon}{2}
+\frac{1}{\lp p^j\rp^{2d-1}\lp p^{2j}-1\rp\lp p^j-1\rp}
\sum_{\substack{\av\in V_j\\
\F_p(\av)=\F_{p^j}\\
\m_\av\in U}}
{\frac{\lv\Per{\lp[R]_{\m_\av},[\rho]_{\m_\av}\rp}\rv}
{\lv\P^1\lp[R]_{\m_\av}\rp\rv}}
&&\text{by \ref{notsmaller}, \hyperref[LW]{(\ref*{LW})}, and \ref{LWj}}\\
&<\epsilon
&&\text{by \ref{notsmaller}, \hyperref[persmall]{(\ref*{persmall})}, and \ref{mostofthem}}.
\end{align*}
\end{proof}

To conclude, we need only prove \cref{babystepz}.
Since its proof is similar to the proof of \cref{thewinner}, we will omit some details.

\begin{proof}[Proof of \cref{babystepz}]
We will prove \hyperref[babyrationals]{\cref*{babystepz}~\ref*{babyrationals}}.
(The proof of \hyperref[babypolynomials]{\cref*{babystepz}~\ref*{babypolynomials}} is similar and easier.)
In a similar way to the proof of \cref{thewinner}, we set
\begin{align*}
k&=k^\text{rat}_{\F_q,d}\\
\rho&=\rho_{\F_q,d}\\
D&=\F_q\lc r_d\rc\\
R&=\F_q\lc r_d,r_{d-1},
\ldots,r_0,s_d,s_{d-1},\ldots,s_0\rc\\
K&=K^\text{rat}_{\F_q,d}=\Frac{(R[t])}
\end{align*}
and for any $j\in\Z_{\geq1}$, we write
\begin{align*}
V_j&=\lp\F_{q^j}\rp^{2d+2},\\
W_j&=\lb\lp a_d,a_{d-1}\ldots,a_0,0,\ldots,0\rp\mid a_d,a_{d-1}\ldots,a_0\in\F_{q^j}\rb\subseteq V_j,\\
R_j&=\F_{q^j}\lc r_d,r_{d-1},\ldots,r_0,s_d,s_{d-1},\ldots,s_0\rc,
\end{align*}
If $\mathbf{a}=\lp a_d,a_{d-1}\ldots,a_0,b_d,b_{d-1}\ldots,b_0\rp\in V_j$, write
\begin{align*}
\F_q\lp\mathbf{a}\rp
&=\F_q\lp a_d,a_{d-1}\ldots,a_0,b_d,b_{d-1}\ldots,b_0\rp,\\
\m_{j,\mathbf{a}}
&=\lp r_d-a_d\rp R_j+\cdots+\lp r_0-a_0\rp R_j
+\lp s_d-b_d\rp R_j+\cdots+\lp s_0-b_0\rp R_j,\\
\m_\mathbf{a}
&=\m_{j,\mathbf{a}}\cap R.
\end{align*}
As before, if $\mathbf{a}\in V_j\setminus W_j$, then
\[
\rho\lp\mathbf{a}\rp\in\F_{q^j}(X);
\]
moreover,
\[
\tag{I}
\label{babyreducingisgood}
\lp\F_{q^j},\rho(\mathbf{a})\rp\text{ and }\lp[R]_{\m_\mathbf{a}},[\rho]_{\m_\mathbf{a}}\rp\text{ are isomorphic}
\hspace{25px}\text{if and only if}\hspace{25px}
\F_q\lp\mathbf{a}\rp=\F_{q^j}.
\]
and
\[
\tag{II}
\label{babyprimitive}
\lv\lb\mathbf{a}\in V_j\mid k\lp\mathbf{a}\rp\neq k_j\rb\rv
\leq 2^{2d+2}q^{j(d+1)}.
\]

Let $\epsilon\in\R_{>0}$.
As before, we use \cref{specializing} to produce an $n\in\Z_{\geq1}$ such that
\[
\fix_K{\lp\rho^n(X)-t\rp}<\frac{\epsilon}{8};
\]
write $G$ for $\Gal{\lp L^{\text{rat}}_{\F_q,d,n}/K\rp}$.
Since $\gcd{\lp q,\lv S_d\rv\rp}=1$ by hypothesis, we know that $\gcd{\lp q,\lv G\rv\rp}=1$, so we may apply \cref{onenn} to find a nonempty open set $U\subseteq\Spec{(R)}$ and a positive integer $J_0$ with the property that if $\m$ is a maximal ideal in $U$, then $\rho$ has good reduction at $\m$, and if both $\m\in U$ and $\lc[R]_\m:\F_q\rc>J_0$ then
\[
\tag{III}
\label{babypersmall}
\frac{\lv[\rho]_\m^n\lp\P^1\lp[R]_\m\rp\rp\rv}
{\lv\P^1\lp[R]_\m\rp\rv}
<\frac{\epsilon}{4}.
\]
And thanks to~\cite[Theorem~1]{LW}, there exists $A\in\R_{>0}$ such that for all $j\in\Z_{\geq1}$,
\[
\tag{IV}
\label{babyLW}
\lv\lb\av\in V_j\mid\m_\av\nin U\rb\rv<Aq^{j(2d+1)}.
\]
Of course, if $\av\in V_j$ and $\m_\av\in U$, then $\av\nin W_j$, since $\rho$ has good reduction at $\m_\av$.

Now we choose any positive integer $J$ with
\begin{enumerate}[(I)]
\setcounter{enumi}{4}
\item\label{babynotsmaller}
$J>\max{\lp\lb 7nd\lv G\rv,J_0\rb\rp}$
\end{enumerate}
and with the property that for all positive integers $j$ greater than $J$,
\begin{enumerate}[(I)]
\setcounter{enumi}{5}
\item\label{babymostofthem}
$\lp1-q^{-2j}\rp^{-1}\lp1-q^{-j}\rp^{-1}<2$,
\item\label{babyLWj}
$Aq^{-j}\lp1-q^{-2j}\rp^{-1}\lp1-q^{-j}\rp^{-1}<\epsilon/4$, and
\item\label{babyprimitivej}
$q^{-jd}\lp2^{2d+2}\rp\lp1-q^{-2j}\rp^{-1}<\epsilon/4$.
\end{enumerate}
Finally, we see that for all positive integers $j$: if $j>J$, then
\begin{align*}
&\frac{1}{\lv\lb\phi\in\F_{q^j}(X)\mid\deg{\phi}=d\rb\rv}
\sum_{\substack{\phi\in\F_{q^j}(X)\\
\deg{\phi}=d}}
{\frac{\lv\Per{\lp\F_{q^j},\phi\rp}\rv}{\lv\P^1\lp\F_{q^j}\rp\rv}}\\
&<\frac{\epsilon}{4}
+\frac{1}{\lp q^j\rp^{2d-1}\lp q^{2j}-1\rp\lp q^j-1\rp}
\hspace{-20px}
\sum_{\substack{\av\in V_j\\
\F_q(\av)=\F_{q^j}\\
\rho\text{ has good reduction at }\m_\av}}
{\hspace{-20px}\frac{\lv\Per{\lp[R]_{\m_\av},[\rho]_{\m_\av}\rp}\rv}
{\lv\P^1\lp[R]_{\m_\av}\rp\rv}}
&&\text{by \hyperref[babyprimitive]{(\ref*{babyprimitive})}, \ref{babyprimitivej}, and \hyperref[babyreducingisgood]{(\ref*{babyreducingisgood})}}\\
&<\frac{\epsilon}{2}
+\frac{1}{\lp q^j\rp^{2d-1}\lp q^{2j}-1\rp\lp q^j-1\rp}
\sum_{\substack{\av\in V_j\\
\F_q(\av)=\F_{q^j}\\
\m_\av\in U}}
{\frac{\lv\Per{\lp[R]_{\m_\av},[\rho]_{\m_\av}\rp}\rv}
{\lv\P^1\lp[R]_{\m_\av}\rp\rv}}
&&\text{by \hyperref[LW]{(\ref*{babyLW})} and \ref{babyLWj}}\\
&<\epsilon
&&\text{by \ref{babynotsmaller}, \hyperref[babypersmall]{(\ref*{babypersmall})}, and \ref{babymostofthem}}.
\end{align*}
\end{proof}

\section*{Acknowledgements} \label{Acknowledgements}

As usual, I would like to thank Andrew Bridy, who is always willing to share his uncanny ability to remember who proved almost any theorem, and when they did so.
I would also like to thank the anonymous reviewers, who provided many useful comments, leading to a stronger version of \cref{thewinner}.

\bibliography{arbitrarydegree}
\bibliographystyle{amsalpha}

\end{document}